\documentclass[12pt]{amsart}

\usepackage{amsmath}
\usepackage{amsthm}
\usepackage{amssymb}
\usepackage{amsfonts,mathrsfs}
\usepackage{amsxtra}
\usepackage{epsfig}
\usepackage{verbatim}
\usepackage{color}
\usepackage{hyperref}

\theoremstyle{plain}
\newtheorem{theorem}[equation]{Theorem}
\newtheorem{proposition}[equation]{Proposition}
\newtheorem{lemma}[equation]{Lemma}

\newtheorem{definition}[equation]{Definition}

\theoremstyle{remark}
\newtheorem*{remark}{Remark}

\begin{document}

\title[The Nebenh\"ulle of Smooth Complete Hartogs Domains]{A note on the Nebenh\"ulle of Smooth Complete Hartogs Domains}
\author{Yunus E. Zeytuncu}
\address{Department of Mathematics, Texas A\&M University, College Station, TX 77843}
\email{zeytuncu@math.tamu.edu}
\thanks{AMS Subject Classification. Primary 32A07; Secondary 32T05.}
\date{}
\begin{abstract}
It is shown that a smooth bounded pseudoconvex complete Hartogs domain in $\mathbb{C}^2$ has trivial Nebenh\"ulle.
The smoothness assumption is used to invoke a theorem of D. Catlin from \cite{Catlin80}.
\end{abstract}
\maketitle

\section{Introduction}

Let $\mathbb{D}$ denote the unit disc in $\mathbb{C}$ and let $\psi(z)$ be a continuous and bounded function on $\mathbb{D}$. Let us consider the domain $\Omega$ in $\mathbb{C}^2$ defined by; 
\begin{equation*}
\Omega=\left\{(z_1,z_2)\in \mathbb{C}^2~| z_1\in \mathbb{D}; |z_2|<e^{-\psi(z_1)}\right\}.
\end{equation*}

The domain $\Omega$ is a bounded complete Hartogs domain. Moreover, it is known that (see \cite[page 129]{VlaBook}) $\Omega$ is a pseudoconvex domain 
if and only if $\psi(z)$ is a subharmonic function on $\mathbb{D}.$ In order to focus on pseudoconvex domains; 
we assume that $\psi(z)$ is a subharmonic function for the rest of the note. 

\begin{definition}[\cite{DiedFoar77a}]
The \textit{nebenh\"ulle} of $\Omega$, denoted by $N(\Omega)$, is the interior of the intersection of all pseudoconvex domains 
that compactly contain $\Omega$. We say $\Omega$ has nontrivial Nebenh\"ulle if $N(\Omega)\setminus \Omega$ has interior points.
\end{definition}

Let $\mathcal{F}$ be the set of functions $r(z)$ where $r(z)$ is a subharmonic function on a neighborhood of $\mathbb{D}$ such that $r(z)\leq\psi(z)$ on $\mathbb{D}$. 
We define the following two functions; 
\begin{align*}
R(z)&=\sup_{r\in\mathcal{F}}\left\{r(z) \right\},\\
R^*(z)&=\limsup_{\mathbb{D}\ni\zeta\to z}R(z).
\end{align*}
Note that $R^{*}(z)$ is upper semicontinuous and subharmonic on $\mathbb{D}$.

The following proposition from \cite[Theorem 1]{Shirinbekov86} gives the description of $N(\Omega)$ for $\Omega$ a complete Hartogs domain as above.
\begin{proposition}\label{description}
$N(\Omega)=\left\{(z_1,z_2)\in \mathbb{C}^2~| z_1\in \mathbb{D}; |z_2|<e^{-R^{*}(z_1)}\right\}.$
\end{proposition}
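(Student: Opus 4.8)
The plan is to establish the two inclusions $N(\Omega)\supseteq\widetilde\Omega$ and $N(\Omega)\subseteq\widetilde\Omega$, where I write $\widetilde\Omega=\{(z_1,z_2)\in\mathbb{C}^2\mid z_1\in\mathbb{D},\ |z_2|<e^{-R^{*}(z_1)}\}$ for the domain on the right-hand side of the statement. Two preliminary remarks set the stage: since $R^{*}$ is upper semicontinuous the set $\widetilde\Omega$ is open, and since $R^{*}$ is subharmonic on $\mathbb{D}$ it is a bounded pseudoconvex complete Hartogs domain by the criterion quoted from \cite[p.~129]{VlaBook}; and intersecting $\Omega$ with the pseudoconvex domains $\{|z_1|<1+\varepsilon\}\times\{|z_2|<C\}$ (for $C$ large) already shows $N(\Omega)\subseteq\mathbb{D}\times\mathbb{C}$.

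The first substantive step is a reduction: in the definition of $N(\Omega)$ it suffices to intersect over \emph{complete Hartogs} pseudoconvex domains. Indeed, let $U$ be any pseudoconvex domain with $\Omega\Subset U$; after replacing $U$ by its intersection with a large ball, assume $U$ bounded. Since $\psi$ is bounded we have $\mathbb{D}\times\{0\}\subseteq\Omega$, hence $\overline{\mathbb{D}}\times\{0\}\subseteq\overline{\Omega}\subseteq U$, so on the connected component $V$ of $\{z_1\mid(z_1,0)\in U\}$ containing $\overline{\mathbb{D}}$ the Hartogs radius
\[
\delta_U(z_1)=\sup\{\,t>0\mid\{z_1\}\times\{|z_2|<t\}\subseteq U\,\}
\]
is a positive, finite function. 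The classical ingredient I will invoke is that $-\log\delta_U$ is subharmonic on $V$; the cleanest proof is via the continuity principle, letting the analytic graphs $z_2=t\,e^{-F(z_1)}$ ($t\uparrow1$, with $F$ a holomorphic function whose real part is the relevant harmonic majorant) sweep across $U$. Hence $\widehat U:=\{(z_1,z_2)\mid z_1\in V,\ |z_2|<\delta_U(z_1)\}$ is a pseudoconvex complete Hartogs domain contained in $U$; and since $\overline{\Omega}$ is compact, invariant under $z_2\mapsto e^{i\theta}z_2$, and contained in $U$, a short argument gives $\Omega\Subset\widehat U$. Thus the domains $\widehat U$ are cofinal from below among the pseudoconvex domains compactly containing $\Omega$.

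Granting this reduction, the inclusion $N(\Omega)\supseteq\widetilde\Omega$ is immediate. Write such a competitor as $\widehat U=\{z_1\in V,\ |z_2|<e^{-\rho(z_1)}\}$ with $\rho=-\log\delta_U$ subharmonic on the neighbourhood $V$ of $\overline{\mathbb{D}}$; the containment $\Omega\subseteq\widehat U$ forces $\rho\le\psi$ on $\mathbb{D}$, i.e.\ $\rho\in\mathcal{F}$, so $\rho\le R\le R^{*}$ on $\mathbb{D}$ and therefore $\widehat U\supseteq\widetilde\Omega$. Intersecting over the cofinal family, $\widetilde\Omega\subseteq\bigcap U$, and since $\widetilde\Omega$ is open, $\widetilde\Omega\subseteq N(\Omega)$.

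For the reverse inclusion the goal is to show that this cofinal family is already rich enough to cut $N(\Omega)$ down to $\widetilde\Omega$: concretely, that for each $z_1^{*}\in\mathbb{D}$ and each $\rho_0\in\mathcal{F}$ there are pseudoconvex complete Hartogs domains compactly containing $\Omega$ whose fibre radius at $z_1^{*}$ is arbitrarily close to $e^{-\rho_0(z_1^{*})}$. Given this, $\bigcap\widehat U$ has, over every $z_1\in\mathbb{D}$, fibre contained in $\{|z_2|\le e^{-R(z_1)}\}$, so $N(\Omega)$ — being open and contained in $\bigcap U$ — lies in $\operatorname{int}\{(z_1,z_2)\mid z_1\in\mathbb{D},\ |z_2|\le e^{-R(z_1)}\}$; and a direct check shows this interior is exactly $\widetilde\Omega$ (a point $(z_1^{*},b)$ lies in it iff $R\le-\log|b|-\eta$ throughout a neighbourhood of $z_1^{*}$ for some $\eta>0$, which says precisely $R^{*}(z_1^{*})<-\log|b|$). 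The construction of these competitors is the crux, and I expect it to be the main obstacle: the naive choice $\{z_1\in V_{\rho_0},\ |z_2|<e^{-\rho_0}\}$ need not compactly contain $\Omega$, because along $\partial\mathbb{D}$ the radius $e^{-\rho_0}$ may be smaller than the fibre radius $e^{-\liminf_{\mathbb{D}\ni\zeta\to z_1^{0}}\psi(\zeta)}$ of $\overline{\Omega}$, and the maximum principle rules out simply shrinking $\rho_0$ near $\partial\mathbb{D}$ while keeping its value at the interior point $z_1^{*}$. Overcoming this requires a careful modification of $\rho_0$ on an annular neighbourhood of $\partial\mathbb{D}$ — together, if needed, with a limiting argument over slightly shrunken Hartogs domains $\{z_1\in\mathbb{D},\ |z_2|<e^{-\varepsilon-\psi(z_1)}\}$ as $\varepsilon\downarrow0$ — engineered to restore compact containment at the price of only a controlled perturbation of the value at $z_1^{*}$. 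Combining the two inclusions gives $N(\Omega)=\widetilde\Omega$.
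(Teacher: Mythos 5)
The paper itself contains no proof of this proposition---it is imported wholesale from \cite{Shirinbekov86}---so your attempt has to stand on its own, and as written it establishes only one of the two inclusions. The half you do carry out is sound: replacing an arbitrary pseudoconvex $U\Supset\Omega$ by the complete Hartogs domain $\widehat U$ built from the fibre radius $\delta_U$, noting that $-\log\delta_U$ is subharmonic on a neighbourhood $V\supseteq\overline{\mathbb{D}}$ (Oka's lemma for the Hartogs radius) and is $\le\psi$ on $\mathbb{D}$, hence belongs to $\mathcal{F}$, indeed gives $U\supseteq\widehat U\supseteq\widetilde\Omega$ for every competitor $U$, and openness of $\widetilde\Omega$ yields $\widetilde\Omega\subseteq N(\Omega)$. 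The reduction $N(\Omega)\subseteq\mathbb{D}\times\mathbb{C}$ and the identification of $\operatorname{int}\{(z_1,z_2)\mid z_1\in\mathbb{D},\ |z_2|\le e^{-R(z_1)}\}$ with $\widetilde\Omega$ are also fine.

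The genuine gap is precisely the step you yourself label the crux: the construction, for every $\rho_0\in\mathcal{F}$ and $\varepsilon>0$, of pseudoconvex domains compactly containing $\Omega$ whose fibre radius at a prescribed $z_1^{*}\in\mathbb{D}$ is within $\varepsilon$ of $e^{-\rho_0(z_1^{*})}$. Without it the inclusion $N(\Omega)\subseteq\widetilde\Omega$ is unproved, and that inclusion is the entire content of the proposition. Moreover the obstruction you flag is real, not a technicality. Compact containment $\overline{\Omega}\subseteq U$ forces the defining subharmonic function $u$ of a Hartogs competitor to satisfy $u(z_1^{0})<\liminf_{\mathbb{D}\ni\zeta\to z_1^{0}}\psi(\zeta)$ at every $z_1^{0}\in\partial\mathbb{D}$, whereas the hypothesis $\rho_0\le\psi$ on $\mathbb{D}$ controls $\rho_0$ at such a point only through $\limsup_{\mathbb{D}\ni\zeta\to z_1^{0}}\psi(\zeta)$, and the two can differ when $\psi$ oscillates near $\partial\mathbb{D}$ (which the standing hypotheses allow). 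And, as you observe, the obvious repair---subtracting a superharmonic correction $h$ with $h\ge c>0$ on $\partial\mathbb{D}$ so as to keep $u$ subharmonic---costs at least $c$ at the interior point $z_1^{*}$ by the minimum principle, so no simple cut-off restores compact containment for free. The promised ``careful modification on an annular neighbourhood of $\partial\mathbb{D}$, together with a limiting argument'' is therefore not a routine detail to be filled in later: it is exactly the theorem of Shirinbekov being claimed. As it stands, the proposal must either supply that construction or fall back on the citation, as the paper does.
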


This description does not give much information about the interior of the set difference $N(\Omega)\setminus \Omega$. 
When we drop the continuity assumption on $\psi(z)$; the well known Hartogs triangle gives an example of a domain for which $N(\Omega)\setminus \Omega$ has nonempty interior. 
On the other hand, the continuity assumption is not enough to avoid this phenomena as seen in the following example from \cite{Diederich98}.\\

\noindent \textbf{Example.} Let us take a sequence of points in $\mathbb{D}$ that accumulates at every boundary point of $\mathbb{D}$ 
and let us take a nonzero holomorphic function $f$ on $\mathbb{D}$ that vanishes on this sequence. 
The function defined by $\psi(z)=|f(z)|$ is a subharmonic function and $\Omega$, defined as above for this particular $\psi$, 
is a pseudoconvex domain. On the other hand, any pseudoconvex domain that compactly contains $\Omega$ has to contain the closure of the unit polydisc 
$\mathbb{D}\times\mathbb{D}$. Therefore, $N(\Omega)\setminus \Omega$ has nonempty interior.

This example suggests to impose more conditions on $\psi$ or $\Omega$ to have trivial Nebenh\"ulle. We prove the following theorem in this note.

\begin{theorem}\label{main} 
Suppose $\Omega=\left\{(z_1,z_2)\in \mathbb{C}^2~| z_1\in \mathbb{D}; |z_2|<e^{-\psi(z_1)}\right\}$ is a smooth bounded pseudoconvex complete Hartogs domain. 
Then $N(\Omega)=\Omega$, in particular $\Omega$ does not have nontrivial Nebenh\"ulle.
\end{theorem}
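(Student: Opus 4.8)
The plan is to reduce everything, via Proposition~\ref{description}, to the identity $R^{*}=\psi$ on $\mathbb{D}$, and then to establish it by a thickening argument. One inequality is immediate: each $r\in\mathcal{F}$ satisfies $r\le\psi$ on $\mathbb{D}$, so $R\le\psi$ there, and since $\psi$ is smooth, hence continuous, on $\mathbb{D}$ we get $R^{*}(z)=\limsup_{\mathbb{D}\ni\zeta\to z}R(\zeta)\le\psi(z)$. For the reverse inequality it is enough, because $R\le R^{*}$, to show that for every $z_{0}\in\mathbb{D}$ and every $\varepsilon>0$ there is $r\in\mathcal{F}$ with $r(z_{0})>\psi(z_{0})-\varepsilon$: then $R(z_{0})=\psi(z_{0})$ for all $z_{0}$, so $R^{*}=\psi$. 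Passing to $e^{-r}$, producing such an $r$ is the same as producing a pseudoconvex complete Hartogs domain over a neighborhood of $\overline{\mathbb{D}}$ that compactly contains $\Omega$ and whose fibre over $z_{0}$ is only slightly larger than that of $\Omega$; so the entire issue is whether $\Omega$ can be thickened slightly while staying pseudoconvex.

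I would exploit the smoothness of $\partial\Omega$ as follows. First, smoothness forces $\psi(z_{1})\to+\infty$ as $z_{1}\to\partial\mathbb{D}$ (otherwise the wall $\{z_{1}\in\partial\mathbb{D}\}$ and the graph $\{|z_{2}|=e^{-\psi(z_{1})}\}$ would meet $\partial\Omega$ in a genuine edge), so that $\overline{\Omega}$ meets $\{z_{1}\in\partial\mathbb{D}\}$ only in the circle $\partial\mathbb{D}\times\{0\}$; moreover, near each boundary point $\psi$ has the form $-\tfrac{1}{2m}\log(\mathrm{dist}\text{ to }\partial\mathbb{D})+(\text{a }C^{\infty}\text{ function})$ for some integer $m\ge1$, and a direct computation gives that the Levi form of the graph part of $\partial\Omega$ over $z_{1}\in\mathbb{D}$ is a positive multiple of $\Delta\psi(z_{1})$, which therefore tends to $+\infty$ near $\partial\mathbb{D}$. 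Hence $\partial\Omega$ is uniformly strictly pseudoconvex near the seam $\partial\mathbb{D}\times\{0\}$. Second, on a relatively compact piece $\{|z_{1}|\le1-\delta\}$ of $\mathbb{D}$ the function $\psi$ is subharmonic on a neighborhood, so $\psi-\eta$ already thickens $\Omega$ there, which (for $\delta,\eta$ small) handles $z_{0}$; near $\partial\mathbb{D}$ one instead needs a subharmonic minorant of $\psi$ that extends subharmonically across $\partial\mathbb{D}$ and corresponds to pushing $\partial\Omega$ out past the seam, and it is precisely here that I would invoke the theorem of Catlin from \cite{Catlin80} to supply the requisite plurisubharmonic data. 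Patching the two pieces into a single subharmonic $r$ on a neighborhood of $\overline{\mathbb{D}}$ with $r\le\psi$ on $\mathbb{D}$ --- using the growth of $\Delta\psi$ near $\partial\mathbb{D}$ together with the finer information from \cite{Catlin80} --- then yields $r\in\mathcal{F}$ with $r(z_{0})>\psi(z_{0})-\varepsilon$, so $R^{*}=\psi$ and $N(\Omega)=\Omega$.

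The step I expect to be the main obstacle is this patching. The near-boundary piece must be a finite (locally bounded) subharmonic function, since it has to extend past $\partial\mathbb{D}$, whereas $\psi$ equals $+\infty$ there; so the two minorants have to be glued in a ``minimum-like'' fashion, and a minimum of subharmonic functions need not be subharmonic. Carrying this out requires quantitative control on how strictly pseudoconvex $\partial\Omega$ becomes as one approaches the seam $\partial\mathbb{D}\times\{0\}$ --- control available exactly because $\partial\Omega$ is smooth --- and providing it in a usable form is, I believe, the role that the theorem of Catlin from \cite{Catlin80} plays in the note.
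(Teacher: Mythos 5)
Your reduction via Proposition~\ref{description} is fine: since $\psi$ is continuous, $R^{*}\le\psi$ on $\mathbb{D}$, and $N(\Omega)=\Omega$ would follow if for every $z_{0}\in\mathbb{D}$ and $\varepsilon>0$ one could produce a subharmonic $r$ on a neighborhood of $\overline{\mathbb{D}}$ with $r\le\psi$ on $\mathbb{D}$ and $r(z_{0})>\psi(z_{0})-\varepsilon$. But this production --- the entire content of the theorem in your formulation --- is never carried out. You delegate the near-boundary minorant and the gluing to ``the theorem of Catlin from \cite{Catlin80}'', and that theorem cannot play this role: Theorem~\ref{Catlin} asserts the existence of a function in $A^{\infty}(\Omega)$ that extends holomorphically past no boundary point; it supplies no plurisubharmonic data, no quantitative lower bound on the Levi form, and nothing about subharmonic minorants of $\psi$ that extend across $\partial\mathbb{D}$. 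So the step you yourself flag as the main obstacle (gluing a finite subharmonic function defined past $\partial\mathbb{D}$ to $\psi-\eta$ in the interior, where a minimum of subharmonic functions is not subharmonic) is left entirely open, and the tool you name for closing it is of the wrong kind.

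The geometric claims feeding into that step are also unjustified or false. Smoothness of $\partial\Omega$ does not force $\psi(z_{1})\to+\infty$ at $\partial\mathbb{D}$: the paper's standing hypothesis is that $\psi$ is \emph{bounded} (so if your claim were correct the theorem would be vacuous), and the boundary of a smooth complete Hartogs domain may contain a Levi-flat piece of the cylinder $\partial\mathbb{D}\times\mathbb{C}$ joined infinitely tangentially to the graph $\{|z_{2}|=e^{-\psi(z_{1})}\}$, in which case there is no strict pseudoconvexity near the seam at all. Even when $\psi\to+\infty$, the Levi form of the graph part (proportional to $\Delta\psi$) need not blow up, since weakly pseudoconvex boundary points may accumulate at $\partial\mathbb{D}$; smoothness gives no ``uniform strict pseudoconvexity near the seam,'' and the asserted local form $-\tfrac{1}{2m}\log(\mathrm{dist})+C^{\infty}$ is not established. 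For comparison, the paper argues by contradiction and uses Catlin's theorem in a purely function-theoretic way: if $N(\Omega)\neq\Omega$, then by Proposition~\ref{description} and continuity $N(\Omega)$ contains a full neighborhood of some boundary point; every $f\in A^{\infty}(\Omega)$ extends holomorphically to $N(\Omega)$, by combining the dilated partial sums of its $z_{2}$-expansion (Lemma~\ref{approximation}) with the maximum-type estimate on $N(\Omega)$ proved through envelopes of holomorphy of Hartogs domains (Lemma~\ref{uniform}); this contradicts Theorem~\ref{Catlin}. If you wish to keep your direct potential-theoretic route, you need an honest construction of the near-boundary subharmonic minorants together with a $\max$-type (not $\min$-type) gluing, and neither the smoothness hypothesis by itself nor \cite{Catlin80} obviously supplies it.
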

Note that the smoothness assumption on the domain $\Omega$ is a stronger condition than the smoothness assumption on the function $\psi(z)$.

For the rest of the note; $\mathcal{O}(\Omega)$ denotes the set of functions that are holomorphic on $\Omega$, 
$C^{\infty}(\overline{\Omega})$ denotes the set of functions that are smooth up to the boundary of $\Omega$ and $A^{\infty}(\Omega)$ 
denotes the intersection of these two sets.

\section{Proof of Theorem \ref{main}}

Suppose $N(\Omega)\not=\Omega$ and take $p=(p_1,p_2)\in N(\Omega)\setminus \Omega$. 
By Proposition \ref{description}, we have $R^{*}(p_1)<\psi(p_1)$ and by semicontinuity of $R^{*}$ and continuity of $\psi$; 
there exists a neighborhood $\mathcal{U}$ of $p_1$ inside $\mathbb{D}$ such that for all $q_1\in \mathcal{U}$ 
we have $R^{*}(q_1)<\psi(q_1)$. The neighborhood $\mathcal{U}$ guarantees that $N(\Omega)$ contains a full neighborhood 
(in $\mathbb{C}^2$) of the the boundary point $(p_1,e^{-\psi(p_1)}) \in b\Omega$.

After this observation, we prove the following uniform estimate.
\begin{lemma}\label{uniform}
Suppose $p\in N(\Omega)$ and $f$ is a function that is holomorphic in a neighborhood of $\Omega$. Then $|f(p)|\leq\sup_{q\in \Omega}|f(q)|.$
\end{lemma}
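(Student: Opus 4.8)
The plan is to expand $f$ in its Taylor series in the fibre variable $z_2$, bound the coefficients over $\mathbb D$ by feeding them into the family $\mathcal F$ that defines $R^*$, and then sum. We may assume $M:=\sup_\Omega|f|$ is finite and positive (if $M=0$ then, by the computation below, $f$ vanishes identically near $\overline\Omega$ and the claim is trivial). Let $U$ be a neighbourhood of $\overline\Omega$ on which $f$ is holomorphic. Since $(z_1,0)\in\overline\Omega$ for every $z_1\in\overline{\mathbb D}$, the set $U$ contains a ``tube'' $\{(z_1,z_2):z_1\in V_1,\ |z_2|<\varepsilon_1\}$ with $V_1$ an open neighbourhood of $\overline{\mathbb D}$ in $\mathbb C$ and $\varepsilon_1>0$. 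I would then set
\[
  a_k(z_1):=\frac1{2\pi i}\oint_{|z_2|=\varepsilon_1/2}\frac{f(z_1,z_2)}{z_2^{k+1}}\,dz_2 ,
\]
which is holomorphic on $V_1$, and check — using uniqueness of power-series expansions in the slices $\{z_1\}\times\mathbb C$ and the holomorphy of $f$ near $\overline\Omega$ — that $f(z_1,z_2)=\sum_{k\ge0}a_k(z_1)z_2^k$ on all of $\Omega$.

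The key step is the estimate $|a_k(z_1)|\le M\,e^{kR^*(z_1)}$ for $z_1\in\mathbb D$ and $k\ge0$. For fixed $z_1\in\mathbb D$ the circle $\{|z_2|=e^{-\psi(z_1)}\}$ lies in $\overline\Omega$, so the maximum principle gives $\max_{|z_2|=e^{-\psi(z_1)}}|f(z_1,z_2)|\le M$, and hence, by the Cauchy estimates, $|a_k(z_1)|\le M\,e^{k\psi(z_1)}$. For each fixed $k\ge1$ this says that $z_1\mapsto \frac1k\log|a_k(z_1)|-\frac1k\log M$ is a subharmonic function on the neighbourhood $V_1$ of $\overline{\mathbb D}$ that is $\le\psi$ on $\mathbb D$; that is, it belongs to $\mathcal F$, so it is $\le R\le R^*$ on $\mathbb D$, which is exactly the asserted bound (for $k=0$ it reads $|a_0(z_1)|\le M$).

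To conclude, let $p=(p_1,p_2)\in N(\Omega)$. By Proposition~\ref{description}, $p_1\in\mathbb D$ and $e^{R^*(p_1)}|p_2|<1$, so the series converges at $p$ and
\[
  |f(p)|\le\sum_{k\ge0}|a_k(p_1)|\,|p_2|^k\le M\sum_{k\ge0}\bigl(e^{R^*(p_1)}|p_2|\bigr)^k=\frac{M}{1-e^{R^*(p_1)}|p_2|}.
\]
This is only the desired inequality up to the multiplicative constant $\bigl(1-e^{R^*(p_1)}|p_2|\bigr)^{-1}$, so to remove it I would run the same argument with $f^n$ in place of $f$: $f^n$ is still holomorphic near $\overline\Omega$ with $\sup_\Omega|f^n|=M^n$, and its Taylor coefficients are the Cauchy products of the $a_k$, so the same reasoning gives $|f(p)|^n=|f^n(p)|\le M^n\bigl(1-e^{R^*(p_1)}|p_2|\bigr)^{-1}$; letting $n\to\infty$ yields $|f(p)|\le M$.

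I expect the coefficient estimate to be the main obstacle — specifically the recognition that each normalized Taylor coefficient is a member of $\mathcal F$ and is therefore controlled by $R^*$ rather than merely by $\psi$; once that is in place, the ``tube'' observation and the passage to powers of $f$ (needed only to absorb the geometric-series constant) are routine. Note that this argument uses neither smoothness of $\Omega$ nor any theorem of Catlin; those will enter only afterwards, when one must exhibit a function $f$ holomorphic near $\overline\Omega$ whose behaviour near a boundary point of $\Omega$ contradicts Lemma~\ref{uniform}.
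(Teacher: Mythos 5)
Your proof is correct, but it follows a genuinely different route from the paper's. The paper argues by contradiction: if $|f(p)|>\sup_\Omega|f|$, then $g=1/(f-f(p))$ is holomorphic on a complete Hartogs domain $\Omega_1$ compactly containing $\Omega$; by the cited result on envelopes of holomorphy of Hartogs domains, $\widetilde{\Omega_1}$ is a schlicht pseudoconvex complete Hartogs domain, hence contains $N(\Omega)$ and in particular $p$, while the extension of $g$ to $\widetilde{\Omega_1}$ forces $p\notin\widetilde{\Omega_1}$ --- a contradiction. You instead work directly with the Hartogs expansion $f=\sum_k a_k(z_1)z_2^k$: Cauchy estimates on the circles $|z_2|=e^{-\psi(z_1)}$ show that $\tfrac1k\log\bigl(|a_k(z_1)|/M\bigr)$ lies in the family $\mathcal F$, hence is dominated by $R\le R^*$, and the description of $N(\Omega)$ in Proposition \ref{description} lets you sum the series at $p$; the standard passage to $f^n$ then removes the geometric-series constant. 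So your argument consumes Proposition \ref{description} plus elementary subharmonicity where the paper consumes the envelope-of-holomorphy theorem, and it buys something extra: the series converges locally uniformly on $N(\Omega)$ (since $e^{R^*(z_1)}|z_2|$ is upper semicontinuous and $<1$ there), so you obtain the holomorphic extension of $f$ to $N(\Omega)$ explicitly, which also clarifies what $f(p)$ means when $p$ lies outside the neighborhood where $f$ was originally given --- a point the paper handles implicitly through $\widetilde{\Omega_1}$; this local uniform convergence is also what justifies identifying $f^n(p)$ with $f(p)^n$ in your limiting step. Two trivial housekeeping remarks: when some $a_k\equiv 0$ the function $\tfrac1k\log\bigl(|a_k|/M\bigr)$ is identically $-\infty$ and so does not belong to $\mathcal F$, but the bound $|a_k|\le Me^{kR^*}$ is then automatic; and the maximum principle on the circles is not needed, since continuity of $f$ on $\overline\Omega$ already gives $\sup_{\overline\Omega}|f|=M$.
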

\begin{proof}
Assume otherwise, then $g(z_1,z_2)=\frac{1}{f(z_1,z_2)-f(p)}$ is a holomorphic function on some complete Hartogs domain $\Omega_1$ that compactly contains $\Omega$. 

The domain $\Omega_1$ is not necessarily pseudoconvex but its envelope of holomorphy $\widetilde{\Omega_1}$ (which is a single-sheeted(schlicht) and complete Hartogs domain) 
is pseudoconvex (see \cite[page 183]{VlaBook}). 
Moreover, any function holomorphic on $\Omega_1$ extends to a holomorphic function on $\widetilde{\Omega_1}$. 

In particular, $g(z_1,z_2)$ is holomorphic on $\widetilde{\Omega_1}$ and therefore $p\not\in\widetilde{\Omega_1}$. But this is impossible since $p\in N(\Omega)$. This contradiction finishes the proof of the lemma.
\end{proof}

Next, we state an approximation result that is a simpler version of the one in \cite{BarrettFornaess}. Let us take a holomorphic function $f$ on $\Omega$. 
We can expand $f$ as follows: $$f(z_1,z_2)=\sum_{k=0}^{\infty}a_k(z_1)z_{2}^k,$$ where $a_k(z_1)$ is a holomorphic function on $\mathbb{D}$ for all $k\in \mathbb{N}$. 
Let us define the following functions, for any $N\in\mathbb{N}$, 
\begin{equation}
\mathcal{P}_N(z_1,z_2)=\sum_{k=0}^{N}a_k\left(\frac{z_1}{1+\frac{1}{N}}\right)z_2^k.
\end{equation}
It is clear that, each $\mathcal{P}_N$ is a holomorphic function in a neighborhood of $\overline{\Omega}$. 

\begin{lemma}\label{approximation}
Suppose $f \in A^{\infty}(\Omega)$. 
Then the sequence of functions $\left\{\mathcal{P}_N\right\}$ converges uniformly to $f$ on $\overline{\Omega}$.
\end{lemma}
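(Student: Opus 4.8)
The plan is to write $f-\mathcal{P}_N$ as the sum of the tail of the Taylor expansion of $f$ in $z_2$ and the error caused by the contraction $z_1\mapsto z_1/(1+1/N)$, and to estimate the two pieces separately. Writing $f_N(z_1,z_2):=\sum_{k=0}^{N}a_k(z_1)z_2^{k}$, we have $\mathcal{P}_N(z_1,z_2)=f_N\bigl(\tfrac{z_1}{1+1/N},z_2\bigr)$ and
\[
f-\mathcal{P}_N=\sum_{k=N+1}^{\infty}a_k(z_1)z_2^{k}\;+\;\sum_{k=0}^{N}\Bigl(a_k(z_1)-a_k\bigl(\tfrac{z_1}{1+1/N}\bigr)\Bigr)z_2^{k}.
\]
The first sum is $f-f_N$; the second, the contraction error, will itself be split at a cut-off index $M$.

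First I would exploit $f\in A^{\infty}(\Omega)$ to get rapid decay of the $z_2$-coefficients, uniformly on $\overline{\Omega}$: for every $m\in\mathbb{N}$ there is $C_m<\infty$ (depending only on the $C^{m}(\overline{\Omega})$-norm of $f$ and on $\sup_{\mathbb{D}}e^{-\psi}$) with
\[
\sup_{(z_1,z_2)\in\overline{\Omega}}\bigl|a_k(z_1)z_2^{k}\bigr|=\sup_{z_1\in\mathbb{D}}|a_k(z_1)|\,e^{-k\psi(z_1)}\le\frac{C_m}{(1+k)^{m}}.
\]
To see this, fix $z_1\in\mathbb{D}$, note that the closed fibre $\{z_1\}\times\{|z_2|\le e^{-\psi(z_1)}\}$ lies in $\overline{\Omega}$, and expand the smooth $2\pi$-periodic function $\theta\mapsto f\bigl(z_1,e^{-\psi(z_1)}e^{i\theta}\bigr)$ in a Fourier series: its $k$-th coefficient is exactly $a_k(z_1)e^{-k\psi(z_1)}$. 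Because $f$ is holomorphic in $z_2$, the operator $d/d\theta$ acts here as $iz_2\partial_{z_2}$, so the $m$-th $\theta$-derivative is a fixed linear combination of $z_2^{j}\partial_{z_2}^{j}f$, $1\le j\le m$, hence bounded on $\overline{\Omega}$ uniformly in $z_1$; integrating by parts $m$ times in the Fourier integral gives the estimate. Taking $m=3$ already yields $\sup_{\overline{\Omega}}|f-f_N|\le\sum_{k>N}C_3(1+k)^{-3}\to0$.

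Now the contraction error. In the low block $k\le M$ I would use that $a_k=\tfrac1{k!}\partial_{z_2}^{k}f(\cdot,0)$ together with $\overline{\mathbb{D}}\times\{0\}\subseteq\overline{\Omega}$: each $a_k$ is then the restriction to $\mathbb{D}$ of a function continuous on $\overline{\mathbb{D}}$, hence uniformly continuous there, so $a_k\bigl(\tfrac{z_1}{1+1/N}\bigr)\to a_k(z_1)$ uniformly in $z_1$; since $|z_2|$ is bounded on $\overline{\Omega}$ and there are only $M+1$ terms, the low block tends to $0$ uniformly on $\overline{\Omega}$ as $N\to\infty$, for each fixed $M$. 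For the high block $M<k\le N$, the rapid-decay bound applied at the contracted point $\tfrac{z_1}{1+1/N}\in\mathbb{D}$, together with $|z_2|\le e^{-\psi(z_1)}$ on $\overline{\Omega}$, gives
\[
\Bigl|a_k\bigl(\tfrac{z_1}{1+1/N}\bigr)z_2^{k}\Bigr|\le\frac{C_3}{(1+k)^{3}}\exp\!\Bigl(k\bigl[\psi\bigl(\tfrac{z_1}{1+1/N}\bigr)-\psi(z_1)\bigr]\Bigr),
\]
while $|a_k(z_1)z_2^{k}|\le C_3(1+k)^{-3}$; so the high block is under control as soon as one has a uniform estimate
\[
\psi\!\left(\frac{z_1}{1+1/N}\right)-\psi(z_1)\le\frac{C}{N}\qquad(z_1\in\mathbb{D},\ N\ge1),
\]
since then the exponential is $\le e^{C}$ for $k\le N$ and the high block is $\le e^{C}\sum_{k>M}C_3(1+k)^{-3}$, arbitrarily small for large $M$, uniformly in $N$. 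Granting this, the lemma follows by bookkeeping: given $\varepsilon>0$, choose $M$ so that the Taylor tail and the high block are each below $\varepsilon/3$ for all $N\ge M$, then choose $N_0\ge M$ so that the low block is below $\varepsilon/3$ for $N\ge N_0$.

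I expect the last displayed inequality to be the hard part, and the point at which the smoothness of $b\Omega$ — which is strictly stronger than smoothness of $\psi$ on $\mathbb{D}$ — is really used. Setting $\rho=e^{-\psi}$, the inequality asserts $\rho(z_1)\le(1+O(1/N))\,\rho\bigl(\tfrac{z_1}{1+1/N}\bigr)$, i.e. that contracting $z_1$ towards the origin cannot decrease $\rho$ by more than a factor $1+O(1/N)$. I would derive it from the boundary behaviour of $\rho$ that smoothness forces: first, smoothness and boundedness of $\Omega$ imply $\psi(z_1)\to+\infty$ as $|z_1|\to1$ — otherwise $\overline{\Omega}$ would meet $\{|z_1|=1\}$ in more than $\{|z_1|=1\}\times\{0\}$ and $b\Omega$ would carry a codimension-two edge — so the contracted point stays in a region where $\psi$ is bounded while $\psi(z_1)$ itself is large near $b\mathbb{D}$; second, examining an $S^{1}$-invariant smooth defining function $\tilde r(z_1,|z_2|^{2})$ of $\Omega$ near the circle $\{|z_1|=1\}\times\{0\}$, I would try to show that some fixed power $\rho^{2m}$ extends to a $C^{\infty}$ function on $\overline{\mathbb{D}}$ vanishing to first order on $b\mathbb{D}$, say $\rho^{2m}=(1-|z_1|^{2})h$ with $h\in C^{\infty}(\overline{\mathbb{D}})$ and $h>0$. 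The difference $\psi\bigl(\tfrac{z_1}{1+1/N}\bigr)-\psi(z_1)=\tfrac1{2m}\log\bigl(\rho^{2m}(z_1)/\rho^{2m}(\tfrac{z_1}{1+1/N})\bigr)$ then splits into the logarithm of $\tfrac{1-|z_1|^{2}}{1-|z_1|^{2}/(1+1/N)^{2}}\le1$ plus the logarithm of a ratio of values of $h$ at points within $O(1/N)$ of each other, and the latter is $O(1/N)$ by the Lipschitz continuity and positivity of $h$ on $\overline{\mathbb{D}}$; this gives the required estimate and completes the proof.
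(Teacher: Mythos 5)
Your reduction of the lemma is sound, and in places it is more explicit than the paper's own argument: the splitting of $f-\mathcal{P}_N$ into the Taylor tail in $z_2$ plus the contraction error, the rapid decay of $\sup_{\overline{\Omega}}|a_k(z_1)z_2^k|$ obtained by differentiating along the fibre (the paper does exactly this, with two $z_2$-derivatives, getting a bound $C/k^2$), and the low-block argument via $a_k=\frac{1}{k!}\partial_{z_2}^k f(\cdot,0)\in C(\overline{\mathbb{D}})$ are all correct. As you say yourself, the entire weight of the proof then rests on the uniform estimate $\psi\bigl(\tfrac{z_1}{1+1/N}\bigr)-\psi(z_1)\le C/N$, and this is where your argument has a genuine gap.

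Both structural claims you use to derive that estimate are false in general. First, smoothness of $b\Omega$ does not force $\psi(z_1)\to+\infty$ as $|z_1|\to1$: the boundary may contain a Levi-flat piece of the form $\{|z_1|=1\}\times\{|z_2|<c\}$, with the top surface $\{|z_2|=e^{-\psi(z_1)}\}$ joining it tangentially to infinite order, so no codimension-two edge arises; in fact the paper's standing hypothesis is that $\psi$ is continuous and \emph{bounded} on $\mathbb{D}$, which is incompatible with your claim. Second, even when the fibre radius $\rho=e^{-\psi}$ does tend to $0$ at $b\mathbb{D}$, no fixed power of $\rho$ need be comparable to $1-|z_1|^2$: for a domain such as $\{|z_1|^2+\chi(|z_2|^2)<1\}$ with $\chi$ smooth, increasing, equal to $e^{-1/t}$ near $t=0$ and modified for larger $t$ to keep the domain bounded (one checks $\chi'(t)+t\chi''(t)>0$ there, so the domain is pseudoconvex), the boundary is smooth but $\rho(z_1)^2=1/\log\frac{1}{1-|z_1|^2}$ near $|z_1|=1$, which decays more slowly than any power of $1-|z_1|^2$; hence no identity $\rho^{2m}=(1-|z_1|^2)h$ with $h\in C^{\infty}(\overline{\mathbb{D}})$, $h>0$, can hold. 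So your derivation covers only a special subclass of smooth complete Hartogs domains (roughly, those with power-like radius behavior and no Levi-flat part over $b\mathbb{D}$), and the key inequality --- the one point where smoothness of $b\Omega$ must genuinely be used --- remains unproved; note this says your route to the inequality fails, not that the inequality itself is false. For comparison, the paper's own proof is far terser: it records only the Cauchy-type bound $|a_k(z_1)z_2^k|\le C/k^2$ on $\Omega$ (your tail estimate with $m=2$) and asserts uniform convergence, with the effect of the base dilation not addressed explicitly; that missing geometric input is exactly what the cited Barrett--Fornaess approximation machinery supplies, and it is the ingredient you would need to import or prove to complete your argument.
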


\begin{proof}
For $(z_1,z_2)\in \Omega$ and $k\geq 2$, we have;
\begin{align*}
|a_k(z_1)z_2^k|&=\left|\frac{1}{k!}\frac{(k-2)!}{2\pi i}z_2^k\int_{|\zeta|=e^{-\psi(z_1)}}\frac{\frac{\partial^2}{\partial \zeta^2}f(z_1,\zeta)}{\zeta^{k-1}}d\zeta\right|\\
&\leq \frac{1}{2\pi k(k-1)}\left(e^{-\psi(z_1)}\right)^k2\pi e^{-\psi(z_1)}\left(\sup_{\Omega}\left|\frac{\partial^2}{\partial z_2^2}f\right|\right)\frac{1}{(e^{-\psi(z_1)})^{k-1}}\\
&\leq \frac{C}{k^2}
\end{align*}
for some global constant $C$. This gives the uniform convergence.
\end{proof}

Since each $\mathcal{P}_N$ is holomorphic on a neighborhood of $\Omega$; in particular it is holomorphic on $N(\Omega)$. By Lemma \ref{uniform}, the uniform convergence
percolates onto $N(\Omega)$ and therefore we get a holomorphic extension of any function in $A^{\infty}(\Omega)$ to $N(\Omega)$. 
On the other hand, let us remember the following theorem from \cite{Catlin80}.
\begin{theorem}[Catlin, \cite{Catlin80}]\label{Catlin}
On any smooth bounded pseudoconvex domain there exists a function in $A^{\infty}(\Omega)$ that does not extend holomorphically to a neighborhood
of any boundary point.
\end{theorem}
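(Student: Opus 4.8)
The plan is to reduce the global statement to a single boundary point by a category argument, and then to solve the one-point problem with a cutoff-and-correction built from the $\bar\partial$-machinery available on smooth bounded pseudoconvex domains. I would work in the Fr\'echet space $A^{\infty}(\Omega)$ with its $C^{\infty}(\overline{\Omega})$ topology. For $p\in b\Omega$ and $r>0$ set
$$E_{p,r}=\left\{f\in A^{\infty}(\Omega)\ :\ f\text{ extends holomorphically to }\Omega\cup B(p,r)\right\}.$$
First I would record that each $E_{p,r}$ is a linear subspace and, by a normal-families estimate on the extensions, is closed, and that the locus of boundary points across which a fixed $f$ extends is open in $b\Omega$. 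Fixing a countable dense set $\{p_j\}\subset b\Omega$, if every $E_{p_j,1/m}$ is a proper (hence meager) subspace, then by the Baire category theorem a generic $f\in A^{\infty}(\Omega)$ avoids all of them; such an $f$ extends across no $p_j$, and by openness across no boundary point at all. This step is routine once properness is known, so the theorem is reduced to the \textbf{one-point existence statement}: for each $p\in b\Omega$ there is some $f_p\in A^{\infty}(\Omega)$ that does not extend holomorphically across $p$.

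For the one-point statement I would first produce a local singular model. Using smoothness and pseudoconvexity, I would invoke the Diederich--Fornaess bounded plurisubharmonic exhaustion and the associated weighted $L^2$ estimates of H\"ormander--Kohn type to build a function $g_p$, holomorphic on $\Omega\cap U$ and smooth up to $b\Omega\cap U$ for a small neighborhood $U$ of $p$, whose boundary trace is $C^{\infty}$ but fails to be real-analytic at $p$; since a holomorphic extension to a full neighborhood of $p$ would restrict to a real-analytic function on $b\Omega$ near $p$, such a $g_p$ cannot extend across $p$. Next I would globalize by the standard $\bar\partial$-correction: choose $\chi\in C_c^{\infty}(U)$ with $\chi\equiv 1$ near $p$, form the smooth $\bar\partial$-closed $(0,1)$-form $\alpha_p=g_p\,\bar\partial\chi$, which is supported in the shell where $\bar\partial\chi\ne 0$ and in particular vanishes near $p$, and solve $\bar\partial u_p=\alpha_p$ with $u_p\in C^{\infty}(\overline{\Omega})$ using Kohn's global regularity of the $\bar\partial$-Neumann problem on smooth bounded pseudoconvex domains. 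Then $f_p=\chi g_p-u_p\in A^{\infty}(\Omega)$, and near $p$ one has $f_p=g_p-u_p$ with $u_p$ holomorphic on the inner side (as $\alpha_p$ vanishes there).

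The main obstacle is the interface between these two steps, and it is where the smoothness hypothesis does its work. Constructing the local model $g_p$ is already delicate at weakly pseudoconvex, infinite-type boundary points, where no holomorphic support or peak function need exist, so the singularity must be manufactured potential-theoretically rather than by an explicit formula; this is exactly the role of the Diederich--Fornaess exhaustion. The subtler point is that the correction $u_p$ is guaranteed only to be $C^{\infty}(\overline{\Omega})$, and a smooth correction can in principle restore real-analyticity, so one must prevent the singularity from being cancelled. I would handle this by encoding the singularity of $g_p$ as a \emph{quantitative} lower bound on the Gevrey growth of the boundary derivatives at $p$, of a type incompatible with real-analyticity, and by deriving from the $\bar\partial$-estimates a matching upper bound on the growth contributed by $u_p$; since $\alpha_p$ vanishes near $p$, the correction is holomorphic there and its boundary trace is tame enough that the lower bound for $g_p$ survives in $f_p$. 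Establishing these two competing estimates sharply enough is the technical heart of the argument, and it is precisely for the global $\bar\partial$-regularity and the bounded plurisubharmonic exhaustion---both consequences of smoothness of $b\Omega$---that the smoothness assumption is indispensable.
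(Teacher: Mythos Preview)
First, note that the paper does not supply a proof of this theorem; it is quoted from \cite{Catlin80} and used as a black box, so there is no in-paper argument to compare your sketch against. Your Baire-category reduction to a one-point statement is the standard opening move and is correct in outline (closedness of $E_{p,r}$ requires an intermediate-radius normal-families argument, but that is routine).

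The genuine gap is in the one-point construction. Your obstruction to extension---that the boundary trace of $g_p$ fail to be real-analytic at $p$---does not work when $b\Omega$ is merely $C^{\infty}$. A function holomorphic on a full neighborhood of $p$ is real-analytic in the ambient real coordinates, but its restriction to a smooth, non-real-analytic hypersurface is in general only $C^{\infty}$ in any intrinsic parametrization of $b\Omega$ (restrict a coordinate function to a smooth non-analytic perturbation of the sphere). Hence ``non-real-analytic boundary trace'' is not a witness to non-extendability, and the Gevrey bookkeeping you propose for the cancellation problem is tracking the wrong invariant. Even granting that, the correction $u_p$ is holomorphic on $\Omega$ near $p$ but only $C^{\infty}$ up to $b\Omega$, so its own boundary trace has no better analyticity than that of $g_p$; nothing prevents the two defects from cancelling. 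A related issue is that you never actually construct $g_p$: the Diederich--Fornaess exhaustion and weighted $L^{2}$ estimates solve $\bar\partial$-equations, they do not by themselves manufacture a local holomorphic function with a prescribed boundary singularity. Catlin's proof does share your global architecture (Kohn's $C^{\infty}$ regularity for $\bar\partial$ plus cutoff-and-correction), but his local model and his non-extendability criterion are built from $A^{\infty}$ peak-type behavior and quantitative interior growth toward $p$, which is a property that survives subtraction of a globally $C^{\infty}$ correction for reasons unrelated to boundary-trace analyticity.
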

In the first paragraph of this section we showed if $N(\Omega)\not=\Omega$ then $N(\Omega)$ contains a full neighborhood of a boundary point. 
This observation with the one in the previous paragraph contradict Theorem \ref{Catlin}. Therefore we conclude the proof of Theorem \ref{main}.\\   

\begin{remark}
In the description of $\Omega$, the base domain is assumed to be the unit disc $\mathbb{D}$. However, the result \ref{main} is true when the base
is any other planar domain $D$. The description in Proposition \ref{description} and the remark about the envelope of holomorphies, in the proof of Lemma \ref{uniform}, 
are also valid for any base $D$. The approximation statement in Lemma \ref{approximation} can be modified for any base $D$, see \cite{BarrettFornaess}.
\end{remark}

\begin{remark}
Note that $N(\Omega)=\Omega$ does not imply that $\Omega$ has a Stein neighborhood basis; see \cite[Proposition 1]{Sato80} for a false proof and 
\cite{Stensones} for a counterexample.
\end{remark}


\bibliographystyle{plain}
\bibliography{SteinNbhd}

\end{document}